\numberwithin{equation}{section}
\theoremstyle{definition}
\newtheorem{ntn}{Notation}[section]
\theoremstyle{plain}
\newtheorem{prp}[ntn]{Proposition}
\newtheorem{thm}[ntn]{Theorem}
\theoremstyle{remark}
\newtheorem{que}[ntn]{Question}
\newtheorem{rmk}[ntn]{Remark}
\newcommand{\fraka}{{\mathfrak{a}}}
\newcommand{\frakb}{{\mathfrak{b}}}
\newcommand{\KK}{\mathds{K}}
\newcommand{\QQ}{\mathds{Q}}
\newcommand{\ZZ}{\mathds{Z}}
\DeclareMathOperator{\conv}{conv}
\DeclareMathOperator{\N}{Newt}
\DeclareMathOperator{\Spec}{Spec}
\DeclareMathOperator{\relint}{relint}
\begin{document}

\title[]
{A counterexample for subadditivity of multiplier ideals on toric varieties}

\author{Jen-Chieh Hsiao}
\address{
Purdue University\\
Dept.\ of Mathematics\\
150 N.\ University St.\\
West Lafayette, IN 47907\\
USA}
\email{jhsiao@math.purdue.edu}
\thanks{The author was partially supported by NSF under grant DMS~0555319 and DMS~0901123.}

%\author{Uli Walther}
%\address{
%U. Walther\\
%Purdue University\\
%Dept.\ of Mathematics\\
%150 N.\ University St.\\
%West Lafayette, IN 47907\\
%USA}
%\email{walther@math.purdue.edu}
%\thanks{UW was supported by the NSF under grant DMS~0555319, and by the NSA under grant H98230-06-1-0012.}

\begin{abstract}
We construct a $3$-dimensional complete intersection toric variety on which the subadditivity formula doesn't hold, answering negatively a question by Takagi and Watanabe. 
A combinatorial proof of the subadditivity formula on $2$-dimensional normal toric varieties is also provided.
\end{abstract}

\maketitle
%\tableofcontents
%\numberwithin{equation}{section}

%%%%%%%%%%%%%%%%%%%%%%
\section{Introduction}
%%%%%%%%%%%%%%%%%%%%%%

Demailly, Ein and Lazarsfeld \cite{DEL} proved the subadditivity theorem for multiplier ideals on smooth complex varieties, which states
\[ \mathcal{J}(\fraka \frakb) \subseteq \mathcal{J}(\fraka) \mathcal{J}(\frakb) .\]
This theorem is responsible for several applications of multiplier ideals in commutative algebra, in particular to symbolic powers \cite{ELS1}
and Abhyankar valuations \cite{ELS2}.

In a later paper, Takagi and Watanabe \cite{TW04} investigated the extent to which the subadditivity theorem remains true on singular varieties.
They showed that on $\QQ$-Gorenstein normal surfaces, the subadditivity formula holds if and only if the variety is log terminal (\cite{TW04}, Theorem~2.2). Furthermore, they gave an example of a $\QQ$-Gorenstein normal toric threefold on which the formula is not satisfied (\cite{TW04}, Example~3.2).
This led Takagi and Watanabe to ask the following 

\begin{que}\label{subadd} Let $R$ be a Gorenstein toric ring and $\fraka$, $\frakb$ be monomial ideals of $R$. Is it true that 
\[\mathcal{J}(\fraka \frakb) \subseteq \mathcal{J}(\fraka) \mathcal{J}(\frakb) ?\]
\end{que}
The purpose of this article is to provide a counterexample to Question~\ref{subadd}.
We will also give, in section~\ref{2d}, a combinatorial proof of the subadditivity formula on any $2$-dimensional normal toric rings.
The standard notation and facts in \cite{Ful} will be used freely in the presentation.

The author would like to thank his advisor, Uli Walther, for his encouragement during the preparation of this work.
He is also grateful to the referee for the careful reading and useful suggestions.
%%%%%%%%%%%%%%%%%%%%%%
\section{Multiplier Ideals on Toric varieties}
%%%%%%%%%%%%%%%%%%%%%
Let $\KK$ be a field and $R=\KK[M \cap \sigma^{\vee}]$ be the coordinate ring of an affine normal Gorenstein toric variety. Denote $X= \Spec(R)$. 
In this case, the canonical divisor $K_X$ of $X$ is Cartier, so there exists a $u_0 \in M \cap \sigma^{\vee}$ 
such that $(u_0,n_i)=1$ where the $n_i'$s are the primitive generators of $\sigma$.
 For any monomial ideal $\fraka$ of $R$, denote
 $\N(\fraka)$ the Newton polyhedron of $\fraka$ and $\relint  \N(\fraka)$ the relative interior of $ \N(\fraka)$.
 The multiplier ideal $\mathcal{J}(\fraka)$ of $\fraka$ in $R$ admits a combinatorial description:
 \begin{prp} 
\begin{equation} \label{toricmultiplier}
    \mathcal{J} (\fraka ) = \langle \underline{x}^w \in R \mid w+u_0 \in \relint  \N(\fraka) \rangle 
    \end{equation}  
    \end{prp}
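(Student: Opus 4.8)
The plan is to compute $\mathcal{J}(\fraka)$ directly from its definition through a toric log resolution and then read off the combinatorial conditions on the ray generators of the resolving fan. Since the multiplier ideal does not depend on the chosen log resolution, I would take a smooth refinement $\Sigma'$ of the fan $\sigma$ whose set of rays includes all the inner normals to the facets of $\N(\fraka)$. This choice guarantees simultaneously that $X' = X_{\Sigma'} \to X$ is a toric resolution of singularities and that the pullback of $\fraka$ is locally principal, so that $\pi$ is a log resolution of the pair $(X,\fraka)$; the torus-invariant divisors on the smooth $X'$ automatically meet transversally. Writing $v_\rho$ for the primitive generator of a ray $\rho$ of $\Sigma'$ and $D_\rho$ for the associated torus-invariant prime divisor, every object I need is encoded by integers indexed by the rays.

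First I would compute the two divisors entering the definition $\mathcal{J}(\fraka) = \pi_*\mathcal{O}_{X'}(K_{X'/X} - F)$, where $\fraka\cdot\mathcal{O}_{X'} = \mathcal{O}_{X'}(-F)$. On one hand, the order of vanishing of $\fraka$ along $D_\rho$ is the support-function value $b_\rho := \min\{(m,v_\rho) : m \in \N(\fraka)\}$, which is a finite integer since $v_\rho \in \sigma$; hence $F = \sum_\rho b_\rho D_\rho$. On the other hand, the Gorenstein hypothesis is precisely what makes $K_X$ Cartier: because $(u_0,n_i)=1$ for every primitive generator $n_i$ of $\sigma$, one has $K_X = \divisor(\underline{x}^{-u_0})$, and pulling this character divisor back to the smooth $X'$ gives $\pi^*K_X = -\sum_\rho (u_0,v_\rho) D_\rho$, while $K_{X'} = -\sum_\rho D_\rho$. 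Therefore
\[ K_{X'/X} = \sum_\rho \bigl( (u_0,v_\rho) - 1 \bigr) D_\rho, \]
an integral divisor, and this is exactly where $u_0$ enters the final formula.

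Next I would take global sections. For a torus-invariant divisor $\sum_\rho c_\rho D_\rho$ on the smooth toric $X'$, the pushforward $\pi_*\mathcal{O}_{X'}(\sum_\rho c_\rho D_\rho)$ is the $R$-span of those monomials $\underline{x}^w$ with $(w,v_\rho) \ge -c_\rho$ for all rays $\rho$. Substituting $c_\rho = (u_0,v_\rho) - 1 - b_\rho$ and rearranging converts each condition into $(w+u_0, v_\rho) \ge b_\rho + 1$. Thus $\mathcal{J}(\fraka)$ is generated by those $\underline{x}^w \in R$ for which $(w+u_0, v_\rho) \ge b_\rho + 1$ holds for every ray $\rho$ of $\Sigma'$.

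Finally I would identify these inequalities with membership in $\relint \N(\fraka)$. The polyhedron $\N(\fraka)$ is cut out by the facet inequalities $(u, v_\rho) \ge b_\rho$, so its relative interior is described by the strict inequalities $(u, v_\rho) > b_\rho$ along the facet normals. Since $w + u_0 \in M$ and each $v_\rho$ is a lattice vector, $(w+u_0, v_\rho)$ is an integer, so $(w+u_0, v_\rho) > b_\rho$ is equivalent to $(w+u_0, v_\rho) \ge b_\rho + 1$. Because $\Sigma'$ was chosen to contain all facet normals of $\N(\fraka)$, while the conditions attached to the remaining rays are redundant, the two systems of inequalities define the same set, yielding the claimed equality. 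I expect the main obstacle to be the second step, namely extracting $K_{X'/X}$ correctly from the Gorenstein datum $u_0$ and verifying that its coefficients are integral, together with the bookkeeping in the last step confirming that the non-facet rays impose no extra constraint on the relative interior.
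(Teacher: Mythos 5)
Your strategy is the standard one (it is essentially Howald's computation as adapted by Blickle to the toric setting; note that the paper itself does not prove this proposition but cites Hara--Yoshida and Blickle), and most of your computation is correct: the formula $K_{X'/X}=\sum_\rho\bigl((u_0,v_\rho)-1\bigr)D_\rho$, the description of the pushforward by lattice-point inequalities, the integrality argument replacing $>b_\rho$ by $\ge b_\rho+1$, and the redundancy of the non-facet rays (for $0\neq v_\rho\in\sigma$ the face of $\N(\fraka)$ on which $(\,\cdot\,,v_\rho)$ attains its minimum $b_\rho$ is a proper face, since $\N(\fraka)$ is full-dimensional, so every point of $\relint\N(\fraka)$ satisfies the strict inequality).

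The gap is in your first step: requiring only that the rays of $\Sigma'$ contain the facet normals of $\N(\fraka)$ does not guarantee that $\fraka\cdot\mathcal{O}_{X'}$ is locally principal. Principality on the chart of a cone $C\in\Sigma'$ is equivalent to linearity of the support function $h(v)=\min_{m\in\N(\fraka)}(m,v)$ on $C$, i.e.\ to $C$ being contained in a single maximal cone of the normal fan of $\N(\fraka)$. In dimension $2$ your condition does force this, because a two-dimensional cone crossing a wall of the normal fan would contain a facet normal in its interior, contradicting the fan axioms; that is exactly the intuition that fails in dimension $\ge 3$, where walls are two-dimensional and can cut through the interior of a cone containing no facet normal at all. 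Concretely, take $\sigma=\sigma^{\vee}=\RR^3_{\geq 0}$ and $\fraka=\langle x^2y,\ xy^2\rangle$: the facet normals of $\N(\fraka)$ are $(1,0,0)$, $(0,1,0)$, $(0,0,1)$, $(1,1,0)$, the normal fan has exactly one wall, spanned by $(1,1,0)$ and $(0,0,1)$, and the unimodular cone $C$ generated by $(2,1,1)$, $(1,2,1)$, $(1,1,1)$ contains none of the facet normals while its interior meets both sides of that wall. A smooth fan with support $\sigma$ can contain this $C$ together with all four facet normals among its rays; on the chart $U_C$ neither monomial generator of $\fraka$ divides the other, since neither $(1,-1,0)$ nor $(-1,1,0)$ lies in $C^{\vee}$, so $\fraka\cdot\mathcal{O}_{X'}$ is not invertible there, $\pi$ is not a log resolution, and your divisor $F$ with $\fraka\cdot\mathcal{O}_{X'}=\mathcal{O}_{X'}(-F)$ does not exist. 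The fix is standard and salvages the rest of your argument verbatim: choose $\Sigma'$ to be a smooth refinement of the normal fan of $\N(\fraka)$ itself (this fan has support $\sigma$, because the recession cone of $\N(\fraka)$ is $\sigma^{\vee}$); then $h$ is linear on every cone of $\Sigma'$, so the pullback of $\fraka$ is locally principal, and the facet normals automatically occur among the rays of $\Sigma'$, so your final identification of the inequalities with membership in $\relint\N(\fraka)$ goes through unchanged.
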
 
    This is a result by Hara and Yoshida (\cite{HY03},Theorem~4.8) which is generalized by Blickle \cite{B04} to arbitrary normal toric varieties.
% By definition, 
% $ \N(\fraka) $ is the convex hull of $\{ v \in M \mid \underline{x}^v \in \fraka \}$ in $M_{\RR}$, and a point in the relative interior of $\N(\fraka)$ is of the form
% $\sum_{i=0}^d \lambda_i p_i$ ($d$ is the dimension of the real vector subspace in $M_{\RR}$ generated by $\N(\fraka)$) 
% for some $d+1$ affinely independent points $p_i \in \N(\fraka)$ and some positive numbers $\lambda_i$ with $\sum^d_{i=0} \lambda_i = 1$.

%%%%%%%%%%%%%%%%%%%%%%%%%
\section{The Example} \label{mainexample}
%%%%%%%%%%%%%%%%%%%%%%%%
Consider the $3$-dimensional normal semigroup ring $R = \KK[x^2y, xy, xy^2,z]$, $\KK$ a field. 
Notice that $R$ is a complete intersection, and hence Gorenstein. Note also that \[u_0=(1,1,1).\]
Consider the following two ideals of $R$: \[\fraka = \langle x^2y^4, x^{10}y^6z^2 \rangle,\]
\[\frakb = \langle x^{12}y^7, x^{10}y^6z^2 \rangle.\]
Then $\fraka \frakb = (x^{14}y^{11}, x^{12}y^{10}z^2, x^{22}y^{13}z^2, x^{20}y^{12}z^4).$
Denote \[w_1=(14,11,0),\]
       \[w_2=(12,10,2),\]
       \[w_3=(22,13,2),\]
       \[w_4=(20,12,4).\]

Observe that the lattice point \[v=(18,12,2) \in \relint  \N(\fraka \frakb).\] 
To see this, consider the four points 
       \[v_1=w_1=(14,11,0),\]
       \[v_2=w_1+(4,2,0)=(18,13,0),\] 
       \[v_3=w_1+(2,1,4)=(16,12,4),\] 
       \[v_4=\frac1{2}(w_3+w_4)=(21,\frac{25}{2},3).\] 
They are in $\N(\fraka \frakb)$ and do not lie on a plane, namely, they are affinely independent.
% So their convex hull is a tetrahedron contained in $\N(\fraka \frakb)$.
Since
\[ v = \frac5{16}v_1+ \frac1{16}v_2+ \frac1{8}v_3+\frac1{2}v_4, \]
  it is in $\relint \N(\fraka \frakb)$.
 
Now, since $-u_0+v=(17,11,1)$, by \eqref{toricmultiplier}
 \[x^{17}y^{11}z \in \mathcal{J}(\fraka \frakb).\]
We claim that \[x^{17}y^{11}z \notin \mathcal{J}(\fraka) \mathcal{J}( \frakb).\]
An element in $\mathcal{J}(\fraka) \mathcal{J}( \frakb)$ is a finite sum of monomials of the form $c \cdot  \underline{x}^{\alpha} \underline{x}^{\beta}$ where $c \in \KK$, 
$\alpha,\beta \in M \cap \sigma^{\vee}$, $\alpha +u_0 \in \relint \N(\fraka)$, and $\beta +u_0 \in \relint \N(\frakb)$.
If $x^{17}y^{11}z \in \mathcal{J}(\fraka) \mathcal{J}( \frakb)$, then \[-u_0 +v=(17,11,1) = \alpha + \beta \] for some $\alpha$, $\beta$ as above. This means $v=(18,12,2)$ can be written as a sum of a lattice point $( \alpha+u_0)$ in $\relint \N(\fraka)$ and a lattice point $\beta$ in $-u_0 + \relint \N(\frakb)$. We check that this is not possible. 

\begin{figure}[h]
% h = "here"
\[
%\psfrag{0}{$0$}
%\psfrag{1}{$1$}
%\psfrag{2}{$2$}
%\psfrag{3}{$3$}
%\psfrag{4}{$4$}
%\psfrag{5}{$5$}
%\psfrag{6}{$6$}
%\psfrag{7}{$7$}
%\psfrag{8}{$8$}
\includegraphics[width=12cm]{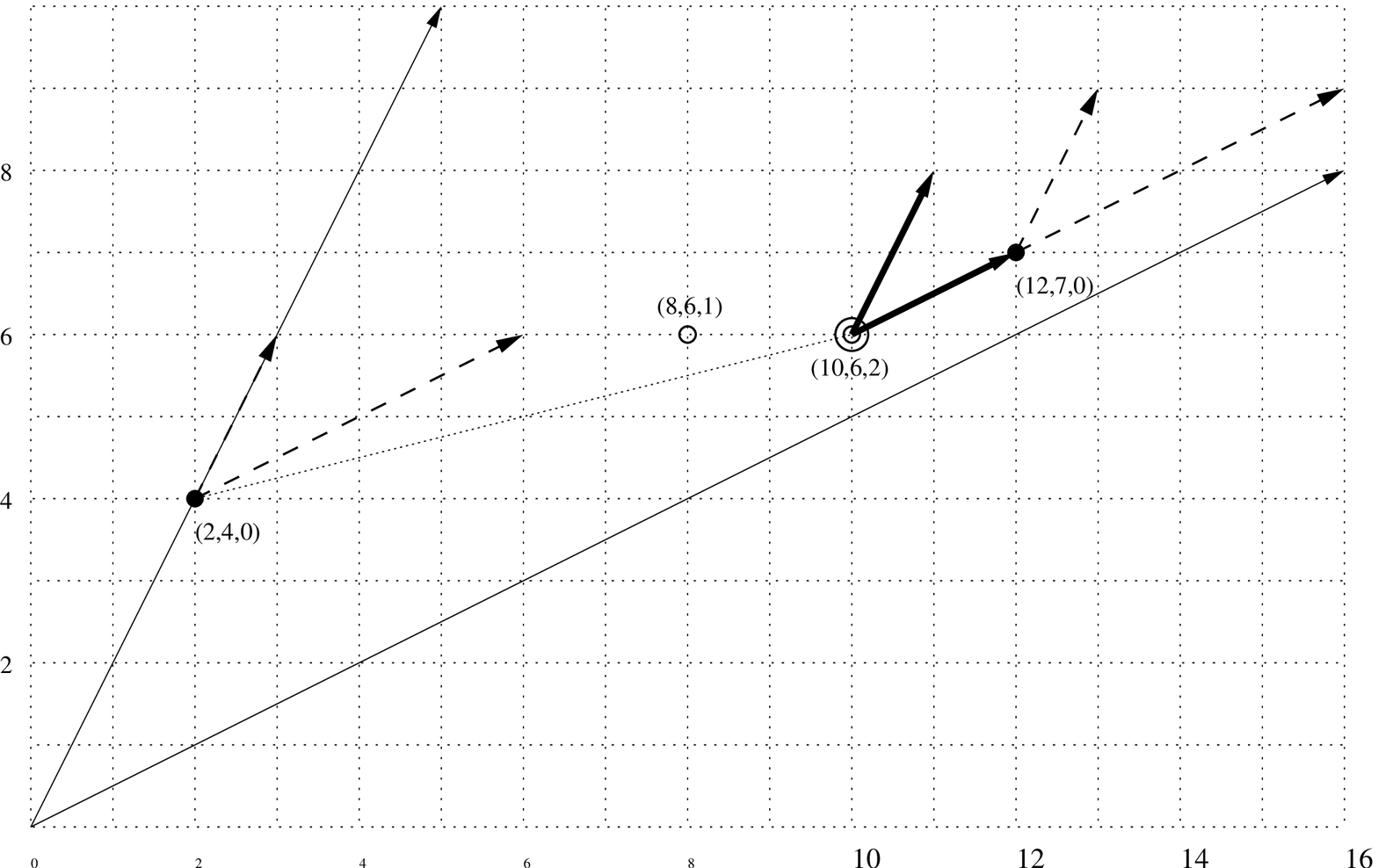}
\]
%\caption{smit}
\end{figure}

Suppose $\alpha$ and $\beta$ are lattice points satisfying
$\alpha + u_0 + \beta = v = (18,12,2)$.
Write $\alpha' =\alpha+u_0 = (a_1,a_2,a_3)$ and $\beta=(b_1,b_2,b_3)$, then  \[(a_1+b_1, a_2+b_2, a_3+b_3)=v=(18,12,2) .\] 
We will show that in each case either $\alpha' \notin \relint \N(\fraka)$ or $\beta + u_0 \notin \relint \N(\frakb)$.
First, note that the Newton polyhedron $\N(\fraka)$ is the intersection of the halfspaces determined by the following  five hyperplanes: $2x-y=0$, $-x+4y=14$, $-x+2y=2$, $-x+2y+2z=6$, $z=0$.
So we have \begin{equation}\label{Na}
\relint \N(\fraka) = \{ (x,y,z) \in M | 2x-y>0, -x+4y>14,-x+2y>2,-x+2y+2z>6, z>0\}.\end{equation}
Also, $\N(\frakb)$ is the intersection of the halfspaces determined by the following four hyperplanes:
 $2x-y=14$,
 $-x+2y=2$,
 $4x-2y+3z=34$,
 $z=0$.
We have 
\begin{equation}\label{Nb}
\relint \N(\frakb)= \{(x,y,z) \in M | 2x-y>14, -x+2y>2, 4x-2y+3z>34, z>0 \}.\end{equation}
We consider the following cases.

\begin{itemize}
\item[Case I:] If $a_2 \geq 7$, then $b_2 \leq 5$ and  $\beta + u_0 \notin \relint \N(\frakb)$. To see this, 
suppose $\beta + u _0 = (b_1+1,b_2+1,b_3+1) \in \relint \N(\frakb)$. By \eqref{Nb}, $2(b_1+1)-(b_2+1) > 14$ and 
$-(b_1+1)+2(b_2+1)>2$. So 
$4(b_2+1)-4 > 2(b_1+1) >14 +(b_2+1)$ and hence $b_2 >5$, which is a contradiction.

\item[Case II:] If $a_2 \leq 4$, then $\alpha' \notin \relint \N(\fraka)$. Indeed, suppose $\alpha'=(a_1,a_2,a_3) \in \relint \N(\fraka)$. By \eqref{Na}, $2a_1-a_2 >0$ and $-a_1+4a_2 >14$. So $8a_2-28>2a_1>a_2$ and hence $a_2 >4$.

\item[Case III:] Suppose $a_2=5$ and $b_2=7$.

\begin{itemize}

\item If $a_1 \geq 6$, then $\alpha' \notin  \relint \N(\fraka)$. Indeed, suppose $\alpha'=(a_1,a_2,a_3) \in \relint \N(\fraka)$. By \eqref{Na}, $-a_1 + 4a_2> 14$ and hence $a_1 < 4a_2-14 = 6$.

\item If $a_1 \leq 5$, then $b_1 \geq 13$. This implies $\beta +u_0 \notin \relint \N(\frakb)$.
Indeed, suppose $\beta + u_0 = (b_1+1,b_2+1,b_3+1) \in \relint \N(\frakb)$. By \eqref{Nb}, $-(b_1+1)+2(b_2+1) >2$ and hence $b_1 < 2(b_1+1)-3 =13$.
\end{itemize}

\item[Case IV:] Suppose $a_2=b_2=6$. 

\begin{itemize}
\item If $b_1 \neq 10$, then $\beta+u_0 \notin \relint \N(\frakb)$. To see this, suppose $\beta+u_0 = (b_1+1,b_2+1,b_3+1) \in \relint \N(\frakb)$. By \eqref{Nb} again, $2(b_1+1)-(b_2+1) > 14$ and 
$-(b_1+1)+2(b_2+1)>2$. This forces $b_1=10$.

\item If $b_1 = 10$, then $\alpha'=(a_1,a_2,a_3)= (8,6,a_3)$ and $\beta = (b_1,b_2,b_3) = (10,6,b_3)$.
\begin{itemize}
\item If $a_3 \leq 0$, then $\alpha' \notin \relint \N(\fraka)$ by \eqref{Na}.
\item If $a_3 > 2$, then $b_3<0$. In this case, $\beta +u_0 \notin \relint \N(\frakb)$ by \eqref{Nb}.
\item If $\alpha' =(a_1,a_2,a_3)= (8,6,1)$, then $-a_1+2a_2+2a_3 = 6$. So $\alpha' \notin \relint \N(\fraka)$ by \eqref{Na}. 
\item If $\alpha' = (a_1,a_2,a_3) = (8,6,2)$, then $\beta=(b_1,b_2,b_3)=(10,6,0)$. So $4(b_1+1)-2(b_2+1)+3(b_3+1) =33<34$. Hence $\beta+u_0 \notin \relint \N(\frakb)$ by \eqref{Nb}.
\end{itemize}
\end{itemize}
\end{itemize}

\begin{rmk} We briefly explain the idea behind the example. Recall that the integral closure $\overline{I}$ of a monomial ideal $I$ in a normal toric ring $R$
is determined by $\N(I)$ (see, for example, \cite{Te04}):
\[ \overline{I} = \langle \underline{x}^w \in R \mid w \in \N(I)\rangle.
\]
So Question~\ref{subadd} is closely related to the containment $\overline{I}\cdot \overline{J} \subseteq \overline{IJ}$ for monomial ideals of $R$.
Huneke and Swanson provide a trick to construct examples where the strict containment $\overline{I} \cdot \overline{J} \subsetneq \overline{IJ}$ occur 
(see \cite{HS06} Example~1.4.9 and the remark after it). We repeat their construction here:

Choose a ring $R'$ and a pair of ideals $I'$, $J'$ in $R'$ such that \[\overline{I'}+\overline{J'} \subsetneq \overline{I'+J'}.\] 
Pick an element 
\[ r \in \overline{I'+J'} \setminus (\overline{I'}+\overline{J'}).
\]
Set $R = R'[Z]$ for some variable $Z$ over $R'$ and set
\[ I = \overline{I'}R+ZR, J=\overline{J'}R+ZR. \]
Then $I$ and $J$ are integrally closed and \[rZ \in \overline{IJ} \setminus \overline{I} \cdot \overline{J}. \]

This kind of construction doesn't always guarantee a counterexample to Question~\ref{subadd}. 
However, a suitable choice of $r$, $Z$, $R'$, $I'$, and $J'$ will do.
In our example, take \[R' = \KK[x^2y,xy,xy^2],\]
\[ r = x^8y^6,\]
\[ I'= \langle x^2y^4 \rangle,\]
\[ J'= \langle x^{12}y^7 \rangle, \]
\[ Z = x^{10}y^6z^2. \]
Then $rZ =x^{18}y^{12}z^2$ is exactly the crucial point we considered in the example.
\end{rmk}

%%%%%%%%%%%%%%%%%%%%%%%%%
\section{Two-dimensional case} \label{2d}
%%%%%%%%%%%%%%%%%%%%%%%%
Let $R =\KK[ M \cap \sigma^{\vee}]$, $\KK$ a field, be a $2$-dimensional normal toric ring and denote $X= \Spec(R)$. 
Then there exists a primitive lattice point $w_0 \in M\cap \sigma^{\vee}$ such that $(w_0, n_i)=r \in \ZZ_{\geq 0}$
where the $n_i'$s are the primitive generators of $\sigma$. 
So the canonical divisor $K_X$ of $X$ is $\QQ$-Cartier and $R$ is $\QQ$-Gorenstein.

Set $u_0 = w_0/r$. By Theorem~4.8 in \cite{HY03}, for any monomial ideal $\fraka$ in $R$
\begin{equation} \label{Qgor}
    \mathcal{J} (\fraka ) = \langle \underline{x}^w \in R \mid w+u_0 \in \relint  \N(\fraka) \rangle. 
    \end{equation}  
The following theorem establishes the subadditivity formula on two-dimensional normal toric rings.
\begin{thm} \label{2d} For any pair of monomial ideals $\fraka$, $\frakb$ in $R$, 
\[ \mathcal{J}(\fraka \frakb) \subseteq \mathcal{J}(\fraka) \mathcal{J}(\frakb) .\]
\end{thm}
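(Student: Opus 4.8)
The plan is to reduce the stated inclusion to a purely lattice-theoretic decomposition in the plane and then to settle that decomposition by planar lattice geometry. Since $\mathcal{J}(\fraka)$, $\mathcal{J}(\frakb)$, $\mathcal{J}(\fraka\frakb)$ are monomial, it suffices to test the inclusion on monomials. A monomial $\underline{x}^w$ lies in $\mathcal{J}(\fraka)\mathcal{J}(\frakb)$ if and only if $w=\alpha+\beta$ for some $\alpha,\beta\in M\cap\sigma^{\vee}$ with $\alpha+u_0\in\relint\N(\fraka)$ and $\beta+u_0\in\relint\N(\frakb)$; here any extra factor coming from $\sigma^{\vee}\cap M$ is absorbed into $\alpha$, since adding an element of $\sigma^{\vee}$ to a point of $\relint\N(\fraka)$ keeps it there. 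By \eqref{Qgor}, $\underline{x}^w\in\mathcal{J}(\fraka\frakb)$ means exactly $w+u_0\in\relint\N(\fraka\frakb)$. Thus the theorem reduces to: every lattice point $w$ with $w+u_0\in\relint\N(\fraka\frakb)$ admits such a decomposition. I note in passing that the membership requirements $\alpha,\beta\in\sigma^{\vee}$ will come for free: once $\alpha$ is a lattice point with $\alpha+u_0\in\relint\N(\fraka)$, one has $(\alpha,n_i)>-(u_0,n_i)=-1$ for each primitive generator $n_i$ of $\sigma$, and integrality forces $(\alpha,n_i)\geq0$.

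First I would record two convex-geometry inputs: the Newton polyhedron of a product is the Minkowski sum, $\N(\fraka\frakb)=\N(\fraka)+\N(\frakb)$, and the relative interior of a Minkowski sum splits, $\relint(\N(\fraka)+\N(\frakb))=\relint\N(\fraka)+\relint\N(\frakb)$. As $\fraka,\frakb\neq0$, both polyhedra are $2$-dimensional, so $\relint$ coincides with the topological interior everywhere below. Combining these, the hypothesis $w+u_0\in\relint\N(\fraka\frakb)$ yields real points $p\in\relint\N(\fraka)$ and $q\in\relint\N(\frakb)$ with $p+q=w+u_0$. The role of the $\QQ$-Gorenstein hypothesis is that $u_0$ lies in the \emph{interior} of $\sigma^{\vee}$ (indeed $(u_0,n_i)=1>0$), and $\sigma^{\vee}$ is precisely the recession cone of each $\N(\cdot)$; hence adding $u_0$ to an interior point stays interior, giving one unit of slack to spend.

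The heart of the matter is then a planar decomposition lemma: with $A=\N(\fraka)$ and $B=\N(\frakb)$, if $w$ is a lattice point with $w+u_0\in\relint(A+B)$, there is a lattice point $\alpha$ with $\alpha+u_0\in\relint A$ and $(w-\alpha)+u_0\in\relint B$. I would attack this through the region
\[
\bar{S}=A\cap\bigl((w+2u_0)-B\bigr),
\]
whose recession cone is $\sigma^{\vee}\cap(-\sigma^{\vee})=\{0\}$ because $\sigma^{\vee}$ is strongly convex; hence $\bar{S}$ is a bounded convex polygon. Spending the slack shows $\bar{S}$ has nonempty interior: the point $p+u_0$ lies in $\relint A$, and $(w+2u_0)-(p+u_0)=q\in\relint B$, so $p+u_0\in\relint\bar{S}$. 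Moreover a lattice point $\alpha+u_0$ in $\relint\bar{S}$ is exactly what is required, since then $\alpha+u_0\in\relint A$ and $(w-\alpha)+u_0=(w+2u_0)-(\alpha+u_0)\in\relint B$ simultaneously. Everything therefore comes down to locating a lattice point of the coset $M+u_0$ in the interior of $\bar{S}$.

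The main obstacle, and the only place where dimension two is used essentially, is precisely this lattice-point existence: a small convex polygon need not contain a lattice point, so one must exploit the integrality of the data rather than mere volume or slack. The edges of $A$ and of $B$ have primitive integral direction vectors, being spanned by exponent lattice points, and after a unimodular change of coordinates on $M$ one may put $\sigma^{\vee}$ in a normal form; I would then analyze $\bar{S}$ edge by edge, describing its boundary by sorting the edges of $A$ and of the reflected polyhedron $(w+2u_0)-B$ by slope (the standard recipe for a planar Minkowski-sum boundary) and using the slack to move a boundary lattice point strictly inside. This is exactly the planar phenomenon that breaks down in higher dimension: the threefold of Section~\ref{mainexample} exhibits an analogous configuration with a three-dimensional recession cone for which no such interior lattice decomposition exists, which is both why subadditivity fails there and why the argument above is confined to the two-dimensional setting.
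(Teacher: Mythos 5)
Your reduction is sound as far as it goes: testing membership on monomials, absorbing the extra factor from $\sigma^{\vee}\cap M$ into $\alpha$, the identities $\N(\fraka\frakb)=\N(\fraka)+\N(\frakb)$ and $\relint(\N(\fraka)+\N(\frakb))=\relint\N(\fraka)+\relint\N(\frakb)$, and the equivalence of the theorem with the existence of a point of the coset $u_0+M$ in the interior of the bounded polygon $\bar{S}=\N(\fraka)\cap\bigl((w+2u_0)-\N(\frakb)\bigr)$ are all correct. But the proof stops exactly where the theorem begins. As you yourself note, a nonempty open planar region need not contain a point of a prescribed coset, so the existence of a coset point in $\relint\bar{S}$ is the \emph{entire} content of subadditivity; your plan for producing one --- ``sorting the edges by slope \dots\ using the slack to move a boundary lattice point strictly inside'' --- is a gesture, not an argument. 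Concretely: the vertices of $\bar{S}$ arise as intersections of edges of $\N(\fraka)$, which pass through points of $M$, with edges of $(w+2u_0)-\N(\frakb)$, which pass through points of $2u_0+M$; in the $\QQ$-Gorenstein case $r>1$ both of these cosets differ from the one you need, namely $u_0+M$, so there is no identified ``boundary lattice point'' to move, and no argument that shifting some point of $M$ by the slack $u_0$ lands inside $\bar{S}$ rather than exiting through one of the edges coming from $-\N(\frakb)$. Deciding \emph{which} point of $M$ to shift is precisely the choice your sketch leaves unmade.

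For comparison, the paper makes that choice by a combinatorial decomposition of the boundary of $\N(\fraka\frakb)$: writing each vertex as $a_i+b_i$ with $a_i\in A$, $b_i\in B$, it observes (this is the step special to dimension two) that if consecutive vertices have $a_i\neq a_{i+1}$ and $b_i\neq b_{i+1}$, then the mixed points $a_i+b_{i+1}$ and $a_{i+1}+b_i$ must also lie on the boundary segment joining them; inserting these points refines the boundary into segments whose endpoints share a common summand, so that $\relint\N(\fraka\frakb)$ is covered by regions of the form $a+\relint\bigl(\conv\{b,b'\}+\sigma^{\vee}\bigr)\subseteq a+\relint\N(\frakb)$ (or the same with the roles of $\fraka$ and $\frakb$ exchanged). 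The coset point in your $\relint\bar{S}$ is then simply $a+u_0$ for the generator exponent $a$ attached to the region containing $w+u_0$: its membership in $\relint\N(\fraka)$ is automatic because $u_0$ lies in the interior of the recession cone, and membership of $(w-a)+u_0$ in $\relint\N(\frakb)$ is exactly what the covering provides. Some such identification of the correct summand $a$ --- which your proposed ``edge by edge'' analysis would, if actually carried out, have to reprove --- is the missing core of your argument.
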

\begin{proof} Write $\fraka = \langle \underline{x}^a \mid a \in A \rangle$ and $\frakb = \langle \underline{x}^b \mid b \in B \rangle$ 
for some finite sets $A$ and $B$ in $M \cap \sigma^{\vee}$. We assume that $\{x^a | a \in A \}$ and $\{x^b | b \in B\}$ are the sets of monomial minimal generators of $\fraka$ and $\frakb$ respectively. Then $\fraka \frakb = \langle \underline{x}^{a+b} \mid a \in A \text{ and } b \in B \rangle$.
Let $\alpha_1, \dots, \alpha_k$ be the vertices of the Newton polyhedron $\N(\fraka \frakb)$ such that 
\[ \alpha_1+ \rho_1, \conv \{\alpha_1, \alpha_2 \}, \dots,
\conv \{\alpha_{k-1}, \alpha_k \}, \text{ and } \alpha_k + \rho_2 \] 
form the boundary of $\N(\fraka \frakb)$, where $\rho_1, \rho_2$ are the two rays of $\sigma^{\vee}$.
Then
\[ \N(\fraka \frakb) = \displaystyle{\bigcup^{k-1}_{i=1}} ( \conv \{\alpha_i, \alpha_{i+1}\} + \sigma^{\vee}). \]
Note also that the $\alpha_i'$s are of the form $a_i+b_i$ for some $a_i \in A$ and $b_i \in B$.
Suppose that for some $i \in \{1 ,\dots, k-1 \}$, we have $a_i \neq a_{i+1}$ and $b_i \neq b_{i+1}$. Then $a_i +b_{i+1}$ and $a_{i+1}+b_i$ lie on the boundary segment $\conv \{\alpha_i, \alpha_{i+1} \}$, 
since otherwise they lie on different sides of  $\conv \{\alpha_i, \alpha_{i+1} \}$, which is a contradiction. For any such $i$, we insert the point $a_i+b_{i+1}$ to the sequence
$\alpha_1, \dots, \alpha_k$. So we obtain a sequence, say $\beta_1=a_1'+b_1', \dots, \beta_s=a_s'+b_s'$, such that, for each $i \in \{1,\dots s-1\}$, either $a_i'=a_{i+1}'$ or
$b_i' = b_{i+1}'$, and that \[ \N(\fraka \frakb) = \displaystyle{\bigcup^{s-1}_{i=1}} ( \conv \{\beta_i, \beta_{i+1}\} + \sigma^{\vee}). \]

Now, observe that \[\relint \N(\fraka \frakb) \subseteq \displaystyle{\bigcup^{s-1}_{i=1}} ( \relint \Delta_i ),\] where 
$\Delta_i = \conv \{\beta_i, \beta_{i+1}\} +\sigma^{\vee}$. 
If $\underline{x}^p \in \mathcal{J}(\fraka \frakb)$, then by \eqref{Qgor} $p+ u_0  \in \relint \N(\fraka \frakb)$ and hence in $\relint \Delta_{i_0}$ for some $i_0$. Without loss of
generality, we may assume $a_{i_0}' = a_{i_0+1}'$. So 
\[ p+u_0 \in \relint \Delta_{i_0} = a_{i_0}' + [ \relint (\conv \{b_{i_0}', b_{i_0+1}' \} +\sigma^{\vee})] 
   \subseteq a_{i_0}'+ \relint \N(\frakb).\]
   Therefore, $p \in a_{i_0}'+[-u_0+ \relint \N(\frakb)].$ Since $a_{i_0}' +u_0 \in \relint \N(\fraka)$, by \eqref{Qgor} we conclude that $\underline{x}^p \in
   \mathcal{J}(\fraka) \mathcal{J}(\frakb)$, as desired.
\end{proof}
\begin{rmk} As one can see in the proof of Theorem \ref{2d}, the choice of $\beta_i$'s is essential. For any $\underline{x}^p \in \mathcal{J}(\fraka \frakb)$, we are able to choose 
$a \in \N(\fraka)$ such that $x^a$ is in the set of monomial minimal generators of $\fraka$ and that $p+u_0 \in a + \relint \N(\frakb)$. This cannot be extended to the higher dimensional case. From the example in 
section~\ref{mainexample}, $x^{17}y^{11}z \in  \mathcal{J}(\fraka \frakb)$ and $u_0=(1,1,1)$.
$\N(\fraka)$ is minimally generated by $x^2y^4$ and $x^{10}y^6z^2$.
But $(16,8,2)=(18,12,2)-(2,4,0)$ and $(8,6,0)=(18,12,2)-(10,6,2)$ are not in $\relint \N(\frakb)$ by \eqref{Nb}. Similarly, 
 $\N(\frakb)$ is minimally generated by $x^12y^7$ and $x^{10}y^6z^2$.
But $(6,5,2)=(18,12,2)-(12,7,0)$ and $(8,6,0)=(18,12,2)-(10,6,2)$ are not in $\relint \N(\fraka)$ by \eqref{Na}.
\end{rmk}
%%%%%%%%%%%%%%%%%%%%%%%%%%%%%%%%%%%%%%%%%%%%%%%%%%%%%%%%%%%%%%%%%%%%%%%%%%%%%%%

\bibliographystyle{amsalpha}
\bibliography{smit}

\def\cprime{$'$} \def\cfac#1{\ifmmode\setbox7\hbox{$\accent"5E#1$}\else
  \setbox7\hbox{\accent"5E#1}\penalty 10000\relax\fi\raise 1\ht7
  \hbox{\lower1.15ex\hbox to 1\wd7{\hss\accent"13\hss}}\penalty 10000
  \hskip-1\wd7\penalty 10000\box7}
  \def\cfudot#1{\ifmmode\setbox7\hbox{$\accent"5E#1$}\else
  \setbox7\hbox{\accent"5E#1}\penalty 10000\relax\fi\raise 1\ht7
  \hbox{\raise.1ex\hbox to 1\wd7{\hss.\hss}}\penalty 10000 \hskip-1\wd7\penalty
  10000\box7}
\providecommand{\bysame}{\leavevmode\hbox to3em{\hrulefill}\thinspace}
\providecommand{\MR}{\relax\ifhmode\unskip\space\fi MR }
% \MRhref is called by the amsart/book/proc definition of \MR.
\providecommand{\MRhref}[2]{%
  \href{http://www.ams.org/mathscinet-getitem?mr=#1}{#2}
}
\providecommand{\href}[2]{#2}
\begin{thebibliography}{DEL00}

\bibitem[Bli04]{B04}
Manuel Blickle, \emph{Multiplier ideals and modules on toric varieties}, Math.
  Z. \textbf{248} (2004), no.~1, 113--121. \MR{MR2092724 (2006a:14082)}

\bibitem[DEL00]{DEL}
Jean-Pierre Demailly, Lawrence Ein, and Robert Lazarsfeld, \emph{A
  subadditivity property of multiplier ideals}, Michigan Math. J. \textbf{48}
  (2000), 137--156, Dedicated to William Fulton on the occasion of his 60th
  birthday. \MR{MR1786484 (2002a:14016)}

\bibitem[ELS01]{ELS1}
Lawrence Ein, Robert Lazarsfeld, and Karen~E. Smith, \emph{Uniform bounds and
  symbolic powers on smooth varieties}, Invent. Math. \textbf{144} (2001),
  no.~2, 241--252. \MR{1826369 (2002b:13001)}

\bibitem[ELS03]{ELS2}
\bysame, \emph{Uniform approximation of {A}bhyankar valuation ideals in smooth
  function fields}, Amer. J. Math. \textbf{125} (2003), no.~2, 409--440.
  \MR{1963690 (2003m:13004)}

\bibitem[Ful93]{Ful}
William Fulton, \emph{Introduction to toric varieties}, Annals of Mathematics
  Studies, vol. 131, Princeton University Press, Princeton, NJ, 1993, The
  William H. Roever Lectures in Geometry. \MR{MR1234037 (94g:14028)}

\bibitem[HS06]{HS06}
Craig Huneke and Irena Swanson, \emph{Integral closure of ideals, rings, and
  modules}, London Mathematical Society Lecture Note Series, vol. 336,
  Cambridge University Press, Cambridge, 2006. \MR{MR2266432 (2008m:13013)}

\bibitem[HY03]{HY03}
Nobuo Hara and Ken-Ichi Yoshida, \emph{A generalization of tight closure and
  multiplier ideals}, Trans. Amer. Math. Soc. \textbf{355} (2003), no.~8,
  3143--3174 (electronic). \MR{MR1974679 (2004i:13003)}

\bibitem[Tei04]{Te04}
Bernard Teissier, \emph{Monomial ideals, binomial ideals, polynomial ideals},
  Trends in commutative algebra, Math. Sci. Res. Inst. Publ., vol.~51,
  Cambridge Univ. Press, Cambridge, 2004, pp.~211--246. \MR{MR2132653
  (2006c:13032)}

\bibitem[TW04]{TW04}
Shunsuke Takagi and Kei-ichi Watanabe, \emph{When does the subadditivity
  theorem for multiplier ideals hold?}, Trans. Amer. Math. Soc. \textbf{356}
  (2004), no.~10, 3951--3961 (electronic). \MR{MR2058513 (2005d:13016)}

\end{thebibliography}

\end{document}